\numberwithin{equation}{section}
\newtheorem{theorem}{Theorem}
\newtheorem{proposition}{Proposition}
\newtheorem{lemma}{Lemma}
\newtheorem{corollary}{Corollary}
\numberwithin{theorem}{section} \numberwithin{lemma}{section}
 \numberwithin{definition}{section}
\numberwithin{proposition}{section}
\def\R{\mathbb R}
\def\bR{\mathbb R}
\def\al{\aligned}
\def\eal{\endaligned}
\def\M{{\bf M}}
\def\be{\begin{equation}}
\def\ee{\end{equation}}
\def\lab{\label}
\def\al{\aligned}
\numberwithin{equation}{section}
\begin{document}

\tracingpages 1
\title[time analyticity]{Time analyticity for the heat equation
and Navier-Stokes equations}

	\author[Hongjie Dong]{Hongjie Dong$^*$}	
	
	\address{Division of Applied Mathematics, Brown University,
Providence, RI 02912, USA}
	
	\email{hongjie$\_$dong@brown.edu}

\author[Qi S. Zhang]{Qi S. Zhang}
\address{Department of Mathematics, University of California,
Riverside, CA 92521, USA }

\email{qizhang@math.ucr.edu}

\date{\today}

\begin{abstract}

We prove the analyticity in time for solutions of two parabolic equations in the whole space, without any decaying or vanishing conditions. One of them involves solutions to the heat equation of   exponential growth of order $2$ on $\M$. Here $\M$ is $\R^d$ or a complete noncompact manifold with Ricci curvature bounded from below by a constant. An implication is a sharp solvability condition for the Cauchy problem of the backward heat equation, which is a well known ill-posed problem. Another implication is a sharp criteria for time analyticity of solutions down to the initial time. The other pertains bounded mild solutions of the incompressible Navier-Stokes equations in the whole space.

There are many long established analyticity results for the Navier-Stokes equations.  See for example \cite{Ka:1} and \cite{FT:1} for spatial and time analyticity in certain integral sense, \cite{CN:1} for pointwise space-time analyticity of 3 dimensional solutions to the Cauchy problem,   and also the pointwise time analyticity results of \cite{Ma:1} and \cite{Gi:1} under zero boundary condition. Our result seems to be the first general pointwise time analyticity result for the Cauchy problem for all dimensions, whose proof involves only real variable method. The proof involves a method of algebraically manipulating the integral kernel, which appears applicable to other evolution equations.

\end{abstract}
\maketitle

\section{Introduction}

In the study of heat and other parabolic equations, one often
hopes to prove that solutions are real analytic in space and
time. While the spatial analyticity is usually
true for generic
solutions, the time analyticity is harder to prove
and is false in
general. For example, it is not difficult to construct a
solution of the heat equation in a finite space-time cylinder
in
the Euclidean setting, which is not time analytic in a sequence
of moments. On the other hand,
under extra assumptions, many time-analyticity
results for the heat equation can be found in the literature.
See, for example,  \cite{Wi:1}.  Moreover, if one imposes zero
boundary conditions on the lateral boundary of a smooth
cylindrical domain, then certain solutions of the heat,
Navier-Stokes, and many other parabolic equations are analytic
in time. See, for example, \cite{Ma:1}, \cite{Ko},
\cite{Gi:1}, and
\cite{EMZ:1}.  One can also consider solutions in certain $L_p$
spaces with $p \in (1, \infty)$. In this setting,
by using complexification argument the time
analyticity with values in an $L_2$-based Gevrey
class of periodic functions
was proved for the Navier-Stokes equations in
\cite{FT:1}. See also \cite{Pr} for an extension to
a large class of dissipative equations in the periodic
setting.

In a related development, there have been renewed interest in the
study of global solutions of the heat equation on the Euclidean
and manifold setting. One example is the study of ancient
solutions of the heat equation, i.e., solutions that exist in
the whole space and in all negative time. Let $\M=\R^d$ or a
complete noncompact Riemannian manifold with nonnegative Ricci
curvature.
In \cite{SZ:1}, it was found that sublinear ancient solutions
are constants.  Later in \cite{LZ:1}, it was shown that the
space of ancient solutions of polynomial growth has finite
dimension and the solutions are polynomials in time. Colding
and
Minicozzi \cite{CM:1} then obtained a sharp dimension bound of
this space. See also  the papers \cite{Ca:1}
and \cite{CM:2} for applications to the study of mean curvature
flows, and  \cite{Ha:1} in the graph case.
In a recent paper \cite{Z:1}, it was observed that ancient
solutions on the above $\M$ with exponential growth in the
space
variable are analytic in time. One application of this result
is
a necessary and sufficient condition on the solvability of the
backward heat equation in this class of solutions, which is
well
known to be ill-posed in general.
Backward heat equations have been studied by many
authors, see, for example, \cite{Mi:1} and \cite{Yo:1}; and
treated in many books. See, for instance, \cite{LL:1}.  They have been applied in such diverse fields as control theory, stochastic analysis,
Ricci flows etc.
There does not appear to be a necessary and sufficient
solvability criteria, except when
$\M$ is a bounded domain for which semigroup theory gives an
abstract criteria. See \cite[Theorem 9]{CJ:1}.

One goal of the current paper is to show that the result in
\cite{Z:1} can be extended to solutions with  exponential
growth  of order $2$ (see \ref{expg}), which is a sharp condition.  Another  goal
is to prove the time analyticity for
all bounded mild solutions of the
Navier-Stokes equations in the whole space. One
implication is that bounded mild
solutions of the Navier-Stokes equations are analytic in
space-time, which yields the unique continuation
property of such solutions. This result may also
have applications in the study of
possible singularity whose blow up limit can be such a
solution.
We note that the space analyticity in more general
setting has been
proven in \cite{Ka:1, GK:1, GS:1, MS, GPS, DL, BBT, Gu, CKV:1, Xu:1},
to name a few.

In a subsequent work, we will study the corresponding problems in the half space.

We will present and prove the results for the heat equation and
the Navier-Stokes equations in Sections 2 and 3 respectively.

\section{The heat equation}

Let $\M$ be a $d$ dimensional, complete, noncompact Riemannian
manifold, $Ric$ be the Ricci curvature and $0$ be a reference
point on $\M$, $d(x, y)$ be the geodesic distance of $x, y \in
\M$. We will use $B(x, r)$  to denote the geodesic ball of
radius $r$ centered at $x$ and
$|B(x, r)|$ to denote the volume.  Given a point $(t,x)$ as
vertex, the standard parabolic cylinder of size $r$ is
$$
Q_r(t,x)=\{ (s,y)  \, | \, d(x, y)<r,  s \in [t-r^2, t] \}.
$$

\begin{theorem}
\lab{thhepoly}
 Let $\M$ be a complete, $d$ dimensional, noncompact Riemannian
 manifold such that the Ricci curvature satisfies $Ric \ge
 -(d-1) K_0$ for a nonnegative constant $K_0$.

Let $u$ be a smooth solution of the heat equation $\partial_t
u-\Delta u=0$ on $[-2, 0] \times {\M}$ of   exponential growth of order $2$, namely
\be
\lab{expg}
 |u{(t,x)}| \le A_1 e^{ A_2  d^2(x, 0)}, \quad \forall (t,x)
 \in
 [-2, 0] \times {\M},
\ee where $A_1$ and $A_2$ are positive constants.
Then $u=u(t,x)$ is analytic in $t \in [-1, 0]$ with radius
$r>0$  depending only on $d$, $K_0$, and $A_2$. Moreover, we
have
$$
u(t,x)= \sum^\infty_{j=0} a_j(x) \frac{ t^j}{j !}
$$
with $\Delta a_{j}(x) = a_{j+1}(x)$, and
$$
|a_{j}(x)| \le
A_1 \, A^{j+1}_3 j^j e^{2 A_2  d^2(x, 0)}, \quad
j=0,1,2,\ldots,
$$
where  $A_3$ is a positive constants depending only on $d$,
$K_0$, and $A_2$.
\end{theorem}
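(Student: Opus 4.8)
The plan is to reduce the whole statement to a single uniform pointwise estimate on the iterated Laplacian. Since $u$ is a smooth solution of $\partial_t u=\Delta u$, differentiating the equation gives $\partial_t^j u=\Delta^j u$ for every $j$, so the only candidate coefficients are $a_j(x)=\partial_t^j u(0,x)=\Delta^j u(0,x)$, and these automatically satisfy $\Delta a_j=a_{j+1}$. Thus it suffices to prove the Cauchy-type bound $|\Delta^j u(t_0,x)|\le A_1 C^{j+1} j!\, e^{2A_2 d^2(x,0)}$ uniformly for $t_0\in[-1,0]$, with $C=C(d,K_0,A_2)$. Since $j!\le j^j$, this yields the claimed coefficient estimate at $t_0=0$ (after choosing $A_3\ge\max(C,1)$), and the same bound at every $t_0$ gives, for each fixed $x$, a holomorphic extension of $t\mapsto u(t,x)$ to a complex disk of radius comparable to $1/C$ around each $t_0\in[-1,0]$, hence real analyticity with a uniform radius $r=r(d,K_0,A_2)>0$.

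The engine is the heat-kernel representation. For $t_0\in[-1,0]$ and small $\tau>0$ I would write
\[
u(t_0,x)=\int_{\M} G(x,y,\tau)\,u(t_0-\tau,y)\,dy,
\]
where $G$ is the minimal positive heat kernel on $\M$. This identity must be justified in the borderline exponential-order-$2$ class: using the Li--Yau Gaussian upper bound $G(x,y,\tau)\le C|B(x,\sqrt\tau)|^{-1}\exp(-d^2(x,y)/(5\tau)+CK_0\tau)$, the integral converges only for $\tau$ below a threshold determined by $A_2$, and a short-time uniqueness theorem for the Cauchy problem in this growth class secures the representation there. This $\tau$-threshold is precisely the source of the $A_2$-dependence of the radius, and since $t_0-\tau\ge-2$ for $\tau\le1$ the data stay inside the given time interval.

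Because $\Delta_x G(x,y,\tau)=\partial_\tau G(x,y,\tau)$, the representation gives $\Delta_x^j u(t_0,x)=\int_{\M}\partial_\tau^j G(x,y,\tau)\,u(t_0-\tau,y)\,dy$, so everything hinges on the kernel estimate
\[
|\partial_\tau^j G(x,y,\tau)|\le \frac{C^{j+1}j!}{\tau^{j}\,|B(x,\sqrt\tau)|}\,\exp\!\Big(-\frac{d^2(x,y)}{C\tau}+CK_0\tau\Big).
\]
In the Euclidean case this follows from the Cauchy integral formula in complex time applied to the explicit Gaussian, estimating $|G(x,y,z)|$ on a circle of radius $\tau/2$ about $\tau$. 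On a manifold the complex-time kernel is not available, so I would instead distribute the time derivatives across the iterated semigroup identity $G(x,y,\tau)=\int_{\M}G(x,z,\tau/2)G(z,y,\tau/2)\,dz$ (the algebraic manipulation of the kernel advertised in the introduction), combining the resulting factors with the known Gaussian bounds so as to harvest the factorial growth while preserving the Gaussian spatial decay. Inserting the growth hypothesis \eqref{expg} then reduces the estimate to a Gaussian integral; splitting $d^2(y,0)\le 2d^2(x,y)+2d^2(x,0)$ and choosing $\tau$ small (depending on $d,K_0,A_2$) so that $2A_2<1/(C\tau)$, Bishop--Gromov volume comparison gives $\int_{\M}|B(x,\sqrt\tau)|^{-1}\exp(-d^2(x,y)/(C\tau))\,e^{A_2 d^2(y,0)}\,dy\le C\,e^{2A_2 d^2(x,0)}$, where the volume factor cancels the one in the kernel bound. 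Fixing this $\tau$ turns $\tau^{-j}$ into a constant to the $j$-th power and yields the desired bound.

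I expect the main obstacle to be the sharp kernel time-derivative estimate on the manifold: obtaining the factorial factor $j!$ (rather than a cruder $j$-dependence) while keeping the full Gaussian localization intact, since the clean complex-analytic argument is unavailable and one must instead track the combinatorics of differentiating the semigroup convolution against the $K_0$-dependent bounds. A secondary, but genuine, difficulty is justifying the representation formula in the critical order-$2$ class, where uniqueness for the Cauchy problem holds only up to a finite time scale set by $A_2$; this is exactly why the conclusion is stated with an $A_2$-dependent radius and why the growth is upgraded from $e^{A_2 d^2}$ to $e^{2A_2 d^2}$ in passing to the coefficients.
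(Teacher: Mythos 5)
Your route is genuinely different from the paper's. The paper never touches the heat kernel: it bounds $\partial_t^k u$ at a point by the parabolic mean value inequality applied to $(\partial_t^k u)^2$, and then controls $\int (\partial_t^k u)^2$ over a small cylinder by iterating a Caccioppoli-type energy estimate $k$ times across an expanding chain of cylinders $\Omega^1_j\subset\Omega^2_j\subset\Omega^1_{j+1}$, each step trading one time derivative for a factor $C_0k^2$ and a slightly larger domain, ending at $[-1,0]\times B(x_0,\sqrt k)$ where the growth hypothesis and Bishop--Gromov close the estimate with $|\partial_t^k u(0,x_0)|\le A_3^{k+1}k^k e^{2A_2 d^2(x_0,0)}$. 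This is entirely real-variable and local, which is why (as the paper's remarks note) it extends verbatim to divergence-form operators with merely measurable coefficients and to Lipschitz domains with Dirichlet/Neumann conditions. Your representation-formula approach instead needs two nontrivial global inputs that the paper deliberately avoids: (i) a uniqueness theorem in the critical order-$2$ class to justify $u(t_0,\cdot)=e^{\tau\Delta}u(t_0-\tau,\cdot)$ --- this is available (Grigor'yan's uniqueness class covers $L^2$-growth $e^{CR^2}$ since $\int^\infty R\,dR/(CR^2)=\infty$, with no curvature hypothesis), but it must be invoked; and (ii) a factorial-in-$j$ Gaussian bound on $\partial_\tau^j G$. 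If both are supplied, your argument is correct and even slightly stronger ($j!$ in place of $j^j$), with $\Delta a_j=a_{j+1}$ obtained for free from $\partial_t^j u=\Delta^j u$ rather than from term-by-term differentiation of the series.

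The one step that would fail as written is your derivation of the kernel estimate by ``distributing the time derivatives across the iterated semigroup identity'': if you write $G_\tau=G_{\tau_1}*\cdots*G_{\tau_j}$ and differentiate in $\tau$, the multinomial expansion contains terms where all $j$ derivatives concentrate on a single factor, e.g.\ $\partial_{\tau_1}^j G_{\tau_1}$, which is exactly the quantity you are trying to bound --- the scheme is circular. The fix is to differentiate in space instead: since $\partial_\tau^j G_\tau=\Delta^j G_\tau$ and $\Delta^j e^{\tau\Delta}=\bigl(\Delta e^{(\tau/j)\Delta}\bigr)^j$ as operators, the kernel of $\partial_\tau^j G_\tau$ is the $j$-fold composition of the kernel $\partial_s G_s$ at $s=\tau/j$, each factor carrying exactly one derivative. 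The single-derivative Gaussian bound $|\partial_s G_s|\le Cs^{-1}|B(x,\sqrt s)|^{-1}e^{-d^2(x,y)/(Cs)+CK_0 s}$ is known under $Ric\ge -(d-1)K_0$, and the standard composition lemma for Gaussian upper bounds (using local volume doubling) loses only a fixed constant per step, yielding $(Cj/\tau)^j\cdot C^j\sim C^{j+1}j!\,e^j\tau^{-j}$ times the Gaussian, which is what you need since $j^j\le j!\,e^j$. With that repair, and the uniqueness citation for the representation formula, your proof goes through; the price relative to the paper's is reliance on these kernel-theoretic inputs, the reward is a sharper constant and a method closer in spirit to the paper's own treatment of the Navier--Stokes equations in Section 3.
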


\begin{proof}
Since the equation is linear, without loss of generality, we
may
assume that $A_1=1$.
It suffices to prove the result for the space time point
$(0,x)$.
Let us recall a well-known parabolic mean value inequality
which
can be found, for instance, in \cite[Theorem 14.7]{Li:1}.
Suppose $v$ is a positive subsolution to the heat equation on
$[0, T] \times {\M}$ and $0$ is a point on $\M$. Let $T_1, T_2
\in [0, T]$ with $T_1<T_2$, $R>0$, $p>0$, and $\delta, \eta \in
(0, 1)$. Then there exist  positive constants $C_1$ and $C_2$,
depending only on $p$ and $d$, such that
$$
\al
&\sup_{[T_1, T_2]\times B(0, (1-\delta) R) }  v^p\\
&\le C_1
\frac{\bar{V}(2R)}{|B(0, R)|}
( R \sqrt{K_0} + 1) \exp ( C_2 \sqrt{K_0 (T_2-T_1)}) \\
&\qquad \times
\left(\frac{1}{\delta R} + \frac{1}{\sqrt{\eta T_1}} \right)^{d+2} \,
\int^{T_2}_{(1-\eta) T_1} \int_{B(0, R)} v^p(t,x)\ dx dt.
\eal
$$
Here $\overline{V}(R)$ is the volume of geodesic balls of radius $R$
in the simply connected space form with constant sectional
curvature $-K_0$.

Let $u$ be the given solution to the heat equation, so that
$u^2$ is a subsolution. Given $x_0 \in \M$ and a
positive integer $k$, with a translation of time, the  above
mean value inequality with $T_1=-1/k$, $T_2=0$, $R=1/\sqrt k$,
$\eta=\delta=1/2$ implies that
$$
\al
\sup_{Q_{1/(2 \sqrt{k})} (0,x_0)} u^2 &\le \frac{C_1
k^{d+2}}{|B(x_0, 1/\sqrt{k})|}
\int_{Q_{1/\sqrt{k}} (0,x_0)} u^2(t,x) \ dx dt\\
&\le \frac{C_2 k^{3d/2+2}}{|B(x_0, 1)|}
\int_{Q_{1/\sqrt{k}} (0,x_0)} u^2(t,x) \ dx dt,
\eal
$$
where we have used the Bishop-Gromov volume comparison
theorem. Note that the above mean value inequality is a local
one since the size of the cubes is less than one. Hence the
constants $C_1$ and $C_2$ are independent of $k$. Since
$\partial^k_t u$ is also a solution to the heat equation, it
follows that
\be
\lab{mviqdkt}
\sup_{Q_{1/(2 \sqrt{k})} (0,x_0)} (\partial^k_t u)^2
\le \frac{C_2 k^{3d/2+2}}{|B(x_0, 1)|}
\int_{Q_{1/\sqrt{k}} (0,x_0)} (\partial^k_t u)^2(t,x) \ dx dt.
\ee Next we will bound the right-hand side.

For integers $j=1, 2, \ldots, k,$ consider the domains:
$$
\al
\Omega^1_{j}&=\{(t,x) \, | \, d(x, x_0)<j/\sqrt{k}, \,\,  t
\in
[-j/k, 0] \},\\
\Omega^2_j&=\{(t,x) \, | \, d(x, x_0)<(j+0.5)/\sqrt{k}, \,\,
t
\in [-(j+0.5)/k, 0] \}.
\eal
$$
Then it is clear that $\Omega^1_j \subset \Omega^2_j \subset
\Omega^1_{j+1}$.

Denote by $\psi^{(1)}_j$ a standard Lipschitz cutoff function
supported in
$$
\{(t,x) \, | \, d(x, x_0)<(j+0.5)/\sqrt{k}, \,\,  t \in (-(j+0.5)/k, (j+0.5)/k)
\}
$$
such that $\psi^{(1)}_j=1$ in $\Omega^1_j$ and $|\nabla
\psi^{(1)}_j|^2 + |\partial_t \psi^{(1)}_j| \le C k$.

 Since $u$ is a smooth solution to the heat equation, we
 deduce,
 by writing
  $\psi=\psi^{(1)}_j$, that
\[
\al
\int_{\Omega^2_j} &(u_t)^2 \psi^2 \ dx dt = \int_{\Omega^2_j}
u_t \Delta u \psi^2 \ dx dt\\
&=-\int_{\Omega^2_j} ((\nabla u)_t \nabla u) \, \psi^2 \ dx dt
- \int_{\Omega^2_j} u_t \nabla u \nabla \psi^2 \ dx dt\\
&= - \frac{1}{2} \int_{\Omega^2_j} (|\nabla u |^2)_t \, \psi^2
\
dx dt - 2 \int_{\Omega^2_j} u_t \psi \nabla u \nabla \psi \ dx
dt\\
&\le \frac{1}{2} \int_{\Omega^2_j} |\nabla u |^2 \, (\psi^2)_t
\
dx dt +\frac{1}{2}
 \int_{\Omega^2_j} (u_t)^2 \psi^2 \ dx dt + 2 \int_{\Omega^2_j}
 |\nabla u|^2 |\nabla \psi|^2 \ dx dt.
\eal
\]
Therefore,
\be
\lab{omg1j<}
\int_{\Omega^1_j} ( u_t)^2 \psi^2 \ dx dt \le C k
\int_{\Omega^2_j} |\nabla u |^2 \ dx dt.
\ee

Denote by $\psi^{(2)}_j$ a standard Lipschitz cutoff function
supported in
$$
\{(t,x) \, | \, d(x, x_0)<(j+1)/\sqrt{k}, \quad  t \in
(-(j+1)/k, (j+1)/k) \}
$$
 such that $\psi^{(2)}_j=1$ in $\Omega^2_j$ and
$$
|\nabla
 \psi^{(2)}_j|^2 + |\partial_t \psi^{(2)}_j| \le C k.
$$
Using $(\psi^{(2)}_j)^2 u$ as a test function in the heat
equation, the standard Caccioppoli inequality (energy
estimate)
between the cubes $\Omega^2_j$ and  $\Omega^1_{j+1}$ shows
that
 \be
\lab{omg2j<}
\int_{\Omega^2_j} | \nabla u|^2 \ dx dt \le C k
\int_{\Omega^1_{j+1}}  u^2 \ dx dt.
\ee  A combination of (\ref{omg1j<}) and (\ref{omg2j<}) gives
us
\be
\lab{ddu2r-4}
\int_{\Omega^1_j} ( u_t)^2  \ dx dt \le C_0 k^2
\int_{\Omega^1_{j+1}} u^2  \ dx dt,
\ee where $C_0$ is a universal constant.

Since $\partial^j_t u$ is a solution, we can replace $u$ in
(\ref{ddu2r-4}) with $\partial^j_t u$ to deduce, after induction:
\be
\lab{utk<}
\int_{\Omega^1_1} (\partial^k_t u)^2  dx dt \le C^k_0 (k)^{2 k}
\int_{\Omega^1_k} u^2  \ dx dt.
\ee

Note that $\Omega^1_1= Q_{1/\sqrt{k}}(0,x_0)$ and
$\Omega^1_k=[-1, 0] \times B(x_0, \sqrt{k})$. Substituting
(\ref{utk<}) into (\ref{mviqdkt}), we
find that
\[
 (\partial^k_t u)^2(0,x_0) \le \frac{C_2 k^{3d/2+2}}{|B(x_0,
 1)|} C^k_0 (k)^{2 k} \int_{[-1, 0] \times  B(x_0, \sqrt{k}) }
 u^2  \ dx dt.
\]
This implies, by the  growth condition
(\ref{expg}) and the Bishop-Gromov volume comparison theorem,
that
\be
\lab{djtu}
|\partial^k_t u(0,x_0)| \le  A^{k+1}_3 k^k e^{ 2 A_2 d^2(x_0,
0)}
\ee for all integers $k \ge 1$. Here $A_3$ is a positive
constant depending only on $A_2$, $K_0$, and $d$ .
We remark that the volume of the ball in the denominator is
cancelled since
\[
|B(x_0, \sqrt{k})| \le e^{c \sqrt{k}} |B(x_0, 1)|.
\]

Fixing a number $R \ge 1$, for $x \in B(0, R)$, choose a
positive integer $j$  and $t \in [- \delta, 0]$
for some small $\delta>0$. Taylor's theorem implies that
\be
\lab{jtaylor}
u(t,x)- \sum^{j-1}_{i=0} \partial^i_t u(0,x) \frac{ t^i}{i !} =
\frac{t^j}{ j !} \partial^j_s u(s,x),
\ee where $s=s(x, t, j) \in [t, 0]$.
By (\ref{djtu}), for sufficiently small $\delta>0$, the
right-hand side of (\ref{jtaylor}) converges to $0$ uniformly
for $x \in B(0, R)$
as $j \to \infty$. Hence
$$
u(t,x)= \sum^{\infty}_{j=0} \partial^j_t u(0,x) \frac{ t^j}{j
!},
$$
i.e., $u$ is analytic in $t$ with radius $\delta$.  Writing
$a_j= a_j(x) = \partial^j_t u(0,x)$. By (\ref{djtu}) again, we
have
$$
\partial_t u(t,x) = \sum^{\infty}_{j=0} a_{j+1}(x) \frac{
t^j}{j
!}
\quad \text{and}\quad
\Delta u(t,x) = \sum^{\infty}_{j=0} \Delta a_j(x) \frac{ t^j}{j
!},
$$
where both series converge uniformly for $(t,x) \in
[-\delta, 0] \times B(0, R) $ for any fixed $R>0$.
Since $u$ is a solution of the heat equation,
this implies
\[
\Delta a_j(x) =a_{j+1}(x)
\] with
\[
|a_j(x)| \le A^{j+1}_3 j^j e^{2 A_2 d^2(x, 0)}.
\] Here $A_3$ a positive constant depending only on $A_2$,
$K_0$, and $d$. This completes the proof of the theorem.
\end{proof}

An immediate application is the following:

\begin{corollary}
\lab{cobhe}
Let $\M$ be as in the theorem. Then the Cauchy problem for the
backward heat equation
\be
\lab{bhe}
\begin{cases} \partial_t u+\Delta u  = 0,\\
u(0,x)=a(x)
\end{cases}
\ee has a smooth  solution of  exponential growth  of order $2$ in
$(0,\delta) \times {\M}$ for some $\delta>0$ if and only if
\be
\lab{aijie}
|\Delta^j a(x)| \le A^{j+1}_3 j^j e^{ A_4  d^2(x, 0)}, \quad
j=0, 1,2,\ldots,
\ee where  $A_3$ and $A_4$ are some positive constants.
\end{corollary}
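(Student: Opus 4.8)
The statement is an equivalence, so I will prove the two implications separately; the necessity direction will be an almost immediate consequence of Theorem \ref{thhepoly}. For necessity, suppose $u$ is a smooth solution of (\ref{bhe}) of exponential growth of order $2$ on $(0,\delta)\times\M$. The plan is to reverse time: setting $v(s,x)=u(-s,x)$, a direct computation gives $\partial_s v-\Delta v=-(\partial_t u+\Delta u)=0$ on $(-\delta,0)\times\M$, and $v$ inherits the bound (\ref{expg}) from $u$. After rescaling time so that the interval of definition is carried to $[-2,0]$ (this changes the metric, $K_0$, and $A_2$ by bounded factors, hence only affects the constants), Theorem \ref{thhepoly} applies to $v$. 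Writing $a_j(x)=\partial_s^j v(0,x)$, the theorem yields $\Delta a_j=a_{j+1}$ and $|a_j(x)|\le A_3^{j+1}j^j e^{2A_2 d^2(x,0)}$. Since $v$ solves the forward heat equation we have $\partial_s^j v=\Delta^j v$, so $a_j=\Delta^j a$, and this is exactly (\ref{aijie}) with $A_4=2A_2$.

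For sufficiency, suppose $a$ satisfies (\ref{aijie}). The natural candidate is the formal series obtained by imposing $\partial_t u=-\Delta u$ and $u(0,\cdot)=a$, namely
\[
u(t,x)=\sum_{j=0}^\infty (-1)^j \Delta^j a(x)\,\frac{t^j}{j!}.
\]
First I would establish convergence and the growth bound: using (\ref{aijie}) together with $j^j/j!\le e^j$, the general term is dominated by $A_3\,(A_3 e t)^j e^{A_4 d^2(x,0)}$, so for $\delta<1/(A_3 e)$ the series converges uniformly on $(0,\delta)\times\M$ and satisfies $|u(t,x)|\le C e^{A_4 d^2(x,0)}$, i.e. exponential growth of order $2$. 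Shifting the index and using $(j+1)^{j+1}/j!\le (j+1)e^{j+1}$ shows that the formally $t$-differentiated series $\sum_j(-1)^{j+1}\Delta^{j+1}a\,t^j/j!$ also converges locally uniformly, which legitimizes differentiation in $t$ term by term and identifies $\partial_t u=-\sum_j(-1)^j\Delta^{j+1}a\,t^j/j!$.

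The main obstacle is to show that $u$ is a genuine smooth classical solution, i.e. that $\Delta$ may likewise be brought inside the sum so that $\Delta u=\sum_j(-1)^j\Delta^{j+1}a\,t^j/j!=-\partial_t u$. The hypothesis (\ref{aijie}) controls only the iterated Laplacians $\Delta^j a$ in $L^\infty$, not the individual second-order derivatives constituting $\Delta$, so the term-by-term action of $\Delta$ is not automatic. I would close this gap with interior elliptic estimates: on a fixed geodesic ball, $\|w\|_{W^{2,p}}$ is bounded by $C(\|\Delta w\|_{L^p}+\|w\|_{L^p})$, and applying this with $w=\Delta^j a$, together with the fact that both $\Delta^j a$ and $\Delta^{j+1}a=\Delta(\Delta^j a)$ obey (\ref{aijie}), produces local bounds on the second derivatives of $\Delta^j a$ of the same exponential-in-$j$ type. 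Iterating (bootstrapping to all orders) then gives locally uniform convergence of the series in $C^\infty$, which justifies the term-by-term computation and shows $\partial_t u+\Delta u=0$ with $u(0,\cdot)=a$ and the asserted growth. The geometry enters only through the dependence of the elliptic constants on $K_0$ and on the ball, which is harmless for the qualitative conclusion.
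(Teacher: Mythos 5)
Your proposal is correct and follows essentially the same route as the paper: necessity by time reversal plus Theorem \ref{thhepoly} (identifying $a_j=\partial_t^j$ at time $0$ with $\Delta^j a$), and sufficiency by summing the power series $\sum_j \Delta^j a(x)\,t^j/j!$ with the same $j^j/j!\le e^j$ geometric-series estimates --- the paper constructs the forward solution on $[-\delta,0]$ and then reverses time, which is exactly your series with the $(-1)^j$ signs on $(0,\delta)$, and your rescaling remark matches the paper's own observation that the interval $[-2,0]$ in the theorem can be replaced by any finite interval.

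The one place you diverge is the verification that the series defines a classical solution. The paper deduces $\partial_t u-\Delta u=0$ directly from the absolute, locally uniform convergence of the three series $\sum_j\Delta^j a\,t^j/j!$, $\sum_j\Delta^{j+1} a\,t^j/j!$, and the termwise $t$-derivative; implicitly this is the standard argument that the partial sums $u_N$ satisfy $\partial_t u_N-\Delta u_N=-\Delta^{N+1}a\,t^N/N!\to 0$ locally uniformly, so $u$ solves the heat equation in the distributional sense and is then smooth by hypoellipticity. You instead flag the fact that \eqref{aijie} controls only iterated Laplacians, not individual second derivatives, and close this by an interior elliptic $W^{2,p}$ bootstrap on the coefficients $\Delta^j a$ to obtain locally uniform $C^\infty$ convergence of the series. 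Your argument is valid --- the elliptic constants are fixed per ball, and the extra $j$-dependence they introduce is harmless against $t^j/j!$, exactly as you say --- and it has the merit of justifying termwise differentiation literally; but the gap you identify is closed more cheaply by the distributional/hypoellipticity remark, which is presumably what the paper's ``easy to check'' compresses. Either way the conclusion stands.
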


\begin{proof}
Suppose (\ref{bhe}) has a smooth solution of  exponential
growth  of order $2$, say $u=u(t,x)$. Then
$u(x, -t)$ is a solution of the heat equation with  exponential
growth  of order $2$. By the theorem
\[
u(x, -t) = \sum^\infty_{j=0} a_j(x) \frac{(-t)^j}{j!}.
\] Then (\ref{aijie}) follows from the theorem since $\Delta^j
a(x) = a_j(x)$ in the theorem.

On the other hand, suppose (\ref{aijie}) holds. Then it is easy
to check that
\[
u(t,x) = \sum^\infty_{j=0} \Delta^j a(x) \frac{t^j}{j!}
\]is a smooth solution of the heat equation for $t \in
[-\delta,
0]$ with
$\delta$ sufficiently small. Indeed, the bounds (\ref{aijie})
guarantee that the above series and the series
\[
 \sum^\infty_{j=0} \Delta^{j+1} a(x) \frac{t^j}{j!} \quad\text{and}\quad
\sum^\infty_{j=0} \Delta^{j} a(x)  \frac{\partial_t t^j}{j!}
\]
all converge absolutely and uniformly in $[-\delta, 0]\times
B(0, R) $ for any fixed $R>0$. Hence $\partial_t u-\Delta u
=0$.
 Moreover $u$ has exponential
growth  of order $2$ since
 \[
|u(t,x)| \le \sum^\infty_{j=0} |\Delta^j a(x)| \frac{|t|^j}{j!}
\le A_3  e^{A_4 d^2(x, 0)} \sum^\infty_{j=0} \frac{\left(A_3  j
|t| \right)^j}{j!} \le A_3  e^{A_4 d^2(x, 0)}
\]
provided that $t \in [-\delta, 0]$ with $\delta$ sufficiently
small.
Thus $u(x, -t)$ is a solution to the Cauchy problem of the
backward heat equation (\ref{bhe}) of exponential
growth  of order $2$.
\end{proof}

{\remark  For the conclusion of the theorem to hold, some
growth
condition for the solution is necessary. Tychonov's
non-uniqueness example can be modified as follows. Let
$v=v(t,x)$ be Tychonov's solution of the heat equation in
$(-\infty, \infty)\times  {\R^d} $,  which is $0$ when $t \le
0$
but nontrivial for $t>0$.
Then $u \equiv v(x, t+1)$ is a nontrivial ancient solution. It
is clearly not analytic in time.
Note that $|u(t,x)|$ grows faster than $e^{c |x|^2}$  for any
$c>0$, but for any $\varepsilon>0$, $|u(x,t)|$ is bounded by
$Ce^{c|x|^{2+\varepsilon}}$ for some positive constants $c$ and
$C$. This implies that our growth condition is sharp}.

{\remark  If $\M =\R^d$,  it is well known that the
solution in the theorem is
also analytic in space variables. In fact, in this
case for the time analyticity the Laplace operator can be
replaced with a uniformly elliptic operator in divergence form
$D_i(a_{ij}D_j)$, where $a_{ij}$ are measurable functions
depending only on $x$. For general manifolds, the
space
analyticity requires certain bounds on curvature and its
derivatives.}

{\remark The time analyticity and backward solvability  results are still valid if the manifold $\M$ is replaced with
a bounded Lipschitz domain $D$ and the solutions are required to satisfy the Dirichlet or Neumann boundary condition. To justify, we can modify the proof of the theorem by replacing the geodesic balls with the fixed domain $D$ intersected with balls. The proof goes through for the following reasons. The first is that the mean value inequality with the geodesic balls replaced with $D$ intersected with balls still holds for $\partial^j_t u$ which satisfies the same boundary condition as $u$ itself. The second is that the boundary terms in the integration by parts procedure all vanish when the geodesic balls are replaced with $D$ intersected with balls.

All these results are also true with the appearance of a time independent inhomogeneous term, which is smooth and has at most exponential growth of order $2$, on the right-hand side of the heat equation. This may have applications in control theory.}

Next we present a further application of the main result in the section:  time analyticity of solutions of the heat equation down to the initial time $0$. Recall the following classical example from \cite{Kow:1}. Given the initial value $u_0(x) = 1/(1+ x^2)$ in $\R$, the solution to the Cauchy problem of the heat equation in $[0, \infty)\times \R $ is not analytic in time at $t=0$.
In the same paper, analyticity of solution down to $t=0$ is linked to the extension property of the initial value to an entire function in the complex plane with certain growth condition. See also \cite{Wi:1} Corollary 3.1b for a relatively modern approach. We mention that under the growth condition, time analyticity down to time $0$ also implies solvability of the backward heat equation, at least in short time. However it seems that the classical authors were mainly concerned with the concept of  analytic extension in space time without realizing that under the growth condition, solutions of the backward heat equation are analytic in time automatically. 
The following result, which is an immediate consequence of the above corollary, provides a necessary and sufficient condition for solutions
to be analytic up to time $0$ without dependence on spatial analyticity, which is not available in the general setting.

\begin{corollary}
\lab{cojiexi0}
Let $\M$ be as in the theorem. Then the Cauchy problem for the heat equation
\be
\lab{he2}
\begin{cases} \partial_t u -\Delta u  = 0,\\
u(0,x)=a(x)
\end{cases}
\ee has a smooth  solution of exponential growth of order $2$, which is also analytic in time in
$[0, \delta) \times {\M}$ for some $\delta>0$ with a radius of convergence independent of $x$ if and only if
\be
\lab{aijie2}
|\Delta^j a(x)| \le A^{j+1}_3 j^j e^{ A_4  d^2(x, 0)}, \quad
j=0, 1,2,\ldots,
\ee where  $A_3$ and $A_4$ are some positive constants.
\end{corollary}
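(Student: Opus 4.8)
The plan is to deduce Corollary~\ref{cojiexi0} from the backward-solvability criterion of Corollary~\ref{cobhe} by reflecting the time variable, the point being that for a solution of the forward heat equation the property of being analytic in $t$ down to $t=0$, with an $x$-independent radius, is equivalent to being the time reflection of a solution of the backward heat equation. Note first that the two coefficient conditions (\ref{aijie}) and (\ref{aijie2}) are verbatim the same, and that in both settings the relevant Taylor coefficients are $\partial^j_t u(0,x)=\Delta^j a(x)$, where the identity $\partial^j_t u=\Delta^j u$ follows by repeatedly differentiating the heat equation. Thus the whole statement will follow once the reflection $v(t,x):=u(-t,x)$ is shown to interchange forward solutions analytic at $t=0$ with backward solutions, while preserving exponential growth of order $2$.

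For the sufficiency direction I would not even need the reflection: assuming (\ref{aijie2}), I set
\[
u(t,x)=\sum_{j=0}^\infty \Delta^j a(x)\,\frac{t^j}{j!},
\]
and invoke the estimates already carried out in the proof of Corollary~\ref{cobhe}. The bounds (\ref{aijie2}) make this series, together with its termwise $\partial_t$- and $\Delta$-derivatives, converge absolutely and uniformly on $[-\delta,\delta]\times B(0,R)$ for $\delta$ small and every fixed $R$, so $u$ solves (\ref{he2}) with $u(0,x)=a(x)$; the same geometric-series bound used there shows $u$ has exponential growth of order $2$; and $u$ is by construction analytic in $t$ with radius at least $\delta$, independent of $x$. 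This is the ``if'' part.

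For the necessity direction I would start from a forward solution $u$ of exponential growth of order $2$ that is analytic in $t$ on $[0,\delta)$ with a radius $r$ independent of $x$, expand $u(t,x)=\sum_j \Delta^j a(x)\,t^j/j!$, and pass to the time reflection. Because $r$ does not depend on $x$, the series converges for $t\in(-r,r)$ simultaneously for all $x$, so $v(t,x):=u(-t,x)=\sum_j \Delta^j a(x)(-t)^j/j!$ is a genuine solution of the backward equation (\ref{bhe}) on $(0,r)\times\M$ with $v(0,x)=a(x)$. Once $v$ is seen to have exponential growth of order $2$, Corollary~\ref{cobhe} applies directly and yields (\ref{aijie2}); equivalently, one may extend $u$ analytically to $(-r,\delta)\times\M$ and apply Theorem~\ref{thhepoly} on a negative-time slab, after rescaling time, to read off the same bounds from (\ref{djtu}).

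The main obstacle is exactly the preservation of exponential growth of order $2$ under the reflection, i.e., controlling $v$ on a negative-time strip. The a priori growth of $u$ is known only for real $t\in[0,\delta)$, and boundedness of an analytic function on the positive real axis does not by itself control it on a full complex disk, so positive-time growth alone cannot bound the individual coefficients $\Delta^j a(x)$; cancellation in $\sum_j \Delta^j a(x)\,t^j/j!$ can otherwise hide large coefficients. This is precisely where the $x$-independence of the radius of convergence must be used: it is the hypothesis that forces the coefficient bounds $|\Delta^j a(x)|\le A_3^{j+1}j^j e^{A_4 d^2(x,0)}$ to hold with the exponential-of-$d^2$ dependence inherited from $u$ and with constants uniform in $x$, equivalently that the reflected solution retains exponential growth of order $2$. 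I expect this uniform coefficient control, rather than any single Cauchy estimate, to be the crux of the argument.
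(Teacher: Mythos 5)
Your sufficiency half is correct, and it is in fact a mild streamlining of the paper's argument rather than a departure from it. The paper proceeds slightly differently: it takes a forward solution $u$ of \eqref{he2} (by a standard existence result), takes a backward solution $v$ of \eqref{bhev} supplied by Corollary \ref{cobhe}, glues them into a two-sided solution $U$ on $(-\delta,\delta)\times\M$, and then applies Theorem \ref{thhepoly} to $U$ to conclude analyticity at $t=0$ with an $x$-independent radius. Your direct construction of $u$ as the series $\sum_j \Delta^j a(x)\,t^j/j!$, justified by the convergence and growth estimates already carried out in the second half of Corollary \ref{cobhe}, delivers the same conclusions (solution, growth of order $2$, radius $\ge\delta$ uniform in $x$) without the gluing step; this part is fine.

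The necessity half is where your proposal has a genuine gap, and you flag it yourself without closing it: everything is reduced to showing that the power-series extension of $u$ to negative times retains exponential growth of order $2$ (equivalently, that the coefficient bounds \eqref{aijie2} hold with constants uniform in $x$), and you then only write that you ``expect'' the $x$-independence of the radius to force this. No mechanism is offered, and none of the tools at hand produces one: a uniform radius $r$ only gives $|\Delta^j a(x)|\le C(x)\,j!\,(2/r)^j$ with a completely uncontrolled $C(x)$; the sup bound of $u$ on the positive real time axis does not control the complex disk (as you correctly observe); and applying Theorem \ref{thhepoly} at interior times $t_0>0$ yields derivative bounds of size $(C/t_0)^{k+1}k^k e^{2A_2 d^2(x,0)}$, which blow up too fast as $t_0\to 0$ to be re-expanded at $t=0$ across the distance $t_0$. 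Crucially, the paper does not derive this step from the radius hypothesis either: its necessity argument is simply that analyticity at $t=0$ with $x$-independent radius extends $u$, via its Taylor series, to a two-sided solution on $(-\delta,\delta)\times\M$, after which it quotes the first half of Corollary \ref{cobhe}; the exponential growth of order $2$ of this extension is covered by the standing declaration opening the proof (``In the proof, all solutions are of exponential growth of order $2$''), i.e., it is treated as part of the solution class rather than deduced. So to match the paper you should replace your hoped-for ``uniform coefficient control from radius-independence'' with the explicit reduction: the termwise-differentiated series converge, so the extension solves the heat equation across $t=0$ within the growth-$2$ class, and Corollary \ref{cobhe} (equivalently Theorem \ref{thhepoly} applied on a negative-time slab) then gives \eqref{aijie2}. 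As written, your necessity direction rests on an unproved claim that the paper itself does not attempt to prove.
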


\proof In the proof, all solutions are of exponential growth of order $2$.

Assuming (\ref{aijie2}),  it is a standard result that
 problem (\ref{he2}) has a solution $u=u(t, x)$ for some $\delta>0$. By Corollary \ref{cobhe}, the following backward problem also has a solution
\be
\lab{bhev}
\begin{cases} \partial_t v +\Delta v  = 0,\\
v(0,x)=a(x)
\end{cases}
\ee in $[0,
\delta) \times {\M}.$ Define the function $U=U(t, x)$ by
\be
U(t, x)=\begin{cases}  u(t, x), \quad t \in [0, \delta),\\
v(-t, x), \quad t \in (-\delta, 0].
\end{cases}
\ee It is straight forward to check that $U$ is a solution of the heat equation in $(-\delta, \delta) \times \M$. By the theorem, $U$ and hence $u$ is analytic at time $0$. We mention that the time interval in the theorem is normalized to $[-2, 0]$ which can be changed to any finite interval.

On the other hand, suppose $u$ is a solution of the problem (\ref{he2}), which is analytic in time at $t=0$ with a radius of convergence independent of $x$. Then, by definition,  $u$ has a power series expansion in a time interval $(-\delta, \delta)$, for some $\delta>0$. Hence (\ref{aijie2}) holds following the proof of Corollary \ref{cobhe}, the first half.
\qed

\section{The Navier-Stokes equations}

The main result of this section is the following theorem.

\begin{theorem}
\label{thm2.1}
Assume that $u$ is a mild solution to the incompressible
Navier-Stokes equations
$$
u_t-\Delta u+u\cdot \nabla u+\nabla p=0
$$
on $[0,1]\times \bR^d$ and
\begin{equation}
                                \label{eq7.19}
|u|\le C_2 \quad \text{in}\ \ [0,1]\times \bR^d.
\end{equation}
Then for any $n\ge 1$,
\begin{equation*}
\sup_{t\in (0,1]} t^n \|\partial_t^n
u(t,\cdot)\|_{L_\infty(\bR^d)}
\le N^{n+1}n^{n}
\end{equation*}
for some sufficiently large constant $N\ge 1$. Consequently,
$u(t,x)$ is analytic in time for any $t\in (0,1]$.
\end{theorem}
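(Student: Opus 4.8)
The plan is to work with the mild (integral) formulation and to prove the stated bound by strong induction on $n$, differentiating the Duhamel formula in time. Writing $\mathbb{P}$ for the Leray projection, a bounded mild solution satisfies
$$
u(t)=e^{t\Delta}u(0)-\int_0^t e^{(t-s)\Delta}\,\mathbb{P}\,\nabla\cdot(u\otimes u)(s)\,ds,
$$
and the first observation is that, although $\mathbb{P}$ is unbounded on $L_\infty$, the composite kernel of $e^{\tau\Delta}\mathbb{P}\nabla\cdot$ (the Oseen tensor) is integrable, so that $\|e^{\tau\Delta}\mathbb{P}\nabla\cdot f\|_{L_\infty}\le C\tau^{-1/2}\|f\|_{L_\infty}$; this is what makes the scheme run for merely bounded, non-decaying data. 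As a preliminary I would record that such $u$ is smooth for $t>0$ with the usual parabolic bounds $\|\nabla^m u(t)\|_{L_\infty}\le C_m\,t^{-m/2}$, so that the time derivatives below exist and the manipulations are legitimate. For the linear part the bound is immediate and sharp: since $\partial_t^n e^{t\Delta}=\Delta^n e^{t\Delta}$ and the heat semigroup is analytic, $\|\partial_t^n e^{t\Delta}u(0)\|_{L_\infty}\le C^{n}n^{n}t^{-n}C_2$, which already has the target form.

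The core of the argument is the nonlinear Duhamel term. To take $\partial_t^n$ of $\int_0^t e^{(t-s)\Delta}\mathbb{P}\nabla\cdot(u\otimes u)(s)\,ds$ without generating the spurious singularity that a naive $\partial_t e^{(t-s)\Delta}=\Delta e^{(t-s)\Delta}$ would produce at $s=t$, I would use the identity $\partial_t e^{(t-s)\Delta}=-\partial_s e^{(t-s)\Delta}$ together with differentiation under the integral sign, integrating by parts in $s$ repeatedly. Carrying this out (the ``kernel manipulation'' of the abstract) expresses $\partial_t^n$ of the Duhamel term as finitely many boundary contributions at the lower endpoint plus $\int e^{(t-s)\Delta}\mathbb{P}\nabla\cdot\,\partial_s^n(u\otimes u)(s)\,ds$, after which Leibniz gives $\partial_s^n(u\otimes u)=\sum_{l=0}^n\binom{n}{l}\partial_s^l u\otimes\partial_s^{n-l}u$. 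The decisive point, and the reason one obtains the analytic bound $n^n$ rather than the Gevrey-$2$ bound $n^{2n}$ that results from trading each time derivative for two spatial ones via $\partial_t^n u=\Delta^n u+\cdots$, is to keep $\partial_t$ as a single unit and to use the convolution estimate $\sum_{l=1}^{n-1}\binom{n}{l}\,l^{l}(n-l)^{n-l}\le C\,n^{\,n+1/2}$, whose polynomial overhead is absorbed into the constant $N^{n}$.

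With this in hand, the induction hypothesis $s^{l}\|\partial_s^l u(s)\|_{L_\infty}\le N^{l+1}l^{l}$ for $l\le n-1$ controls every factor in the sum except the two end terms $l=0$ and $l=n$, which pair the unknown $\partial_s^n u$ with the bounded factor $u$. These self-referential terms are the main obstacle: after the smoothing estimate they turn the desired inequality into a linear Volterra inequality for $\|\partial_t^n u\|_{L_\infty}$ with weakly singular kernel $(t-s)^{-1/2}$ and coefficient $\|u\|_{L_\infty}\le C_2$. I would close it by running the estimate on the intermediate interval $[t/2,t]$ to avoid the initial singularity, inserting the weight $t^n$, and using the (Beta-function type) value of $\int(t-s)^{-1/2}s^{-n}\,ds$ to propagate the weighted bound by a Gronwall-type absorption, choosing $N$ large enough that the self-referential contribution is reabsorbed into the left-hand side. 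The delicate part throughout is the bookkeeping of constants, so that the quadratic nonlinearity, which a priori costs an extra power of $N$ and a factor growing in $n$, is exactly compensated by the smallness supplied by the singular time integral and the time weights, leaving the sharp $N^{n+1}n^{n}$ on the right. Finally, $\sup_{t}t^n\|\partial_t^n u(t)\|_{L_\infty}\le N^{n+1}n^n$ is equivalent to convergence of the time Taylor series with radius proportional to $t$, giving analyticity in time on $(0,1]$.
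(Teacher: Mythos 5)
Your skeleton (Duhamel formula, strong induction on $n$, the $L_1$ bound $\|\nabla E(\tau)\|_{L_1}\le C\tau^{-1/2}$ for the Oseen kernel, a binomial convolution lemma, and a singular Gronwall absorption of the self-referential end terms) is the same as the paper's, but the step at the heart of the problem fails as you describe it. Your plan to use $\partial_t e^{(t-s)\Delta}=-\partial_s e^{(t-s)\Delta}$ and integrate by parts repeatedly, yielding ``boundary contributions at the lower endpoint plus $\int_0^t e^{(t-s)\Delta}\mathcal{P}\nabla\cdot\,\partial_s^n(u\otimes u)\,ds$,'' breaks down at both endpoints. At $s=0$ the boundary terms involve $\partial_s^j(u\otimes u)(0)$, which need not exist: a merely bounded mild solution has no time regularity at $t=0$, and indeed $\partial_s^j u\sim s^{-j}$ is exactly the content of the estimate being proved; moreover the surviving integral diverges, since $\partial_s^n(u\otimes u)\sim s^{-n}$ is not integrable against $(t-s)^{-1/2}$ near $s=0$ for $n\ge 1$. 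Your proposed repair --- restarting on $[t/2,t]$ --- cannot close the stated induction: converting the hypothesis $s^{l}\|\partial_s^{l}u(s)\|_{L_\infty}\le N^{l+1}l^{l}$ into $t$-weights on $[t/2,t]$ costs $2^{l}$, and the boundary terms at $s=t/2$ carry $(t/2)^{-n+1/2}=2^{n-1/2}t^{-n+1/2}$, so step $n$ produces contributions of size $2^nC^nN^{n+1}n^{n}$ against a target of exactly $N^{n+1}n^n$ with the \emph{same} fixed $N$; an exponentially growing loss per level cannot be absorbed, and shrinking the restart window to $\theta t$ merely trades $2^n$ for $\theta^{-n}$ in the linear and boundary terms (with $\theta\sim 1/n$ one even lands at the Gevrey-2 rate $n^{2n}$ you rightly wanted to avoid). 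Relatedly, your convolution bound $\sum_l\binom{n}{l}l^l(n-l)^{n-l}\le Cn^{n+1/2}$ is true, but the extra $\sqrt n$ has nowhere to go, since your time integral supplies smallness $t^{1/2}$, not $n^{-1/2}$.

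The paper's key idea, missing from your sketch, removes both endpoint problems at once without any restart: run the induction on the \emph{weighted} derivatives $\partial_t^n(t^n u)$ (Proposition \ref{prop2.2}), which are bounded uniformly rather than like $t^{-n}$, and distribute the weight inside Duhamel via $t^n=\sum_k\binom{n}{k}(t-s)^k s^{n-k}$. The factor $(t-s)^k$ attached to the kernel gives $\|\partial_t^k(t^k\nabla E(t,\cdot))\|_{L_1}\le C_1^{k+1}k^{k-2/3}t^{-1/2}$ --- the weighted $k$-th time derivative is no more singular than $\nabla E$ itself --- while the factor $s^{n-k}$ attached to $u\otimes u$ vanishes at $s=0$ to high enough order that, after the change of variables $s\to t-s$, the remaining $\partial_t^{n-k}$ pass through the integral with no boundary terms at all. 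The self-referential term is then closed by Gronwall just as you anticipate, but the bookkeeping is deliberately staggered ($N^{n-1/2}n^{n-2/3}$ as target versus $N^{n-2/3}$, $N^{n-3/4}$, $N^{n-k-1/3}$ contributions, together with Lemma \ref{lem2.1}, whose exponents $j^{j-2/3}$ make the binomial sum lose no polynomial factor) precisely to create the absorption margins your scheme lacks. The unweighted estimate $t^n\|\partial_t^n u\|_{L_\infty}\le (N+1)^{n}n^{n}$ is recovered only at the very end via $\partial_t^n(t^ku)=n\partial_t^{n-1}(t^{k-1}u)+t\partial_t^n(t^{k-1}u)$, where the constant is allowed to degrade once, since no further induction is run on it.
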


 The proof of the theorem relies on taking time derivative of
 the integral representation of the solution involving the
 Stokes kernel. One difficulty to overcome is that the time
 derivative of the Stokes kernel is not locally integrable in
 space time, let alone higher order derivatives.
We will manipulate the kernel function algebraically to allow
differentiation. This method seems to be applicable to other
types of equations. We also mention that non-mild solutions of
the Navier-Stokes equations need not be analytic in time, as
given by Serrin's example $u = a(t) \nabla h(x)$ where $h$ is a
harmonic function and $a=a(t)$ is an arbitrary smooth function.

The following lemma will be used frequently.

\begin{lemma}
                                    \label{lem2.1}
For any $n\ge 1$, we have
$$
\sum_{j=1}^{n-1}\binom{n}{j}j^{j-2/3}(n-j)^{n-j-2/3}\le
Cn^{n-2/3},
$$
\end{lemma}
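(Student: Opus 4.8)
The plan is to prove the binomial sum estimate
$$
\sum_{j=1}^{n-1}\binom{n}{j}j^{j-2/3}(n-j)^{n-j-2/3}\le Cn^{n-2/3}
$$
by factoring out $n^n$ and recognizing the remaining sum as a Riemann-type sum that is dominated by its endpoint contributions. First I would write $j = xn$ with $x = j/n \in (0,1)$ and use the crude but clean bound $\binom{n}{j} \le \frac{n^n}{j^j (n-j)^{n-j}}$, which follows from the multinomial inequality $j^j (n-j)^{n-j} \binom{n}{j} \le n^n$ (equivalently, from $\binom{n}{j} = \frac{n!}{j!(n-j)!}$ together with $m! \ge (m/e)^m$ type bounds, though one must be slightly careful and instead use $m^m/m! \le e^m$ to get the inequality in the correct direction). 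Substituting this into the summand, the factors $j^j$ and $(n-j)^{n-j}$ cancel against those in the denominator of the binomial bound, leaving
$$
\sum_{j=1}^{n-1}\binom{n}{j}j^{j-2/3}(n-j)^{n-j-2/3}\le n^n \sum_{j=1}^{n-1} j^{-2/3}(n-j)^{-2/3}.
$$

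Next I would estimate the clean sum $S_n := \sum_{j=1}^{n-1} j^{-2/3}(n-j)^{-2/3}$. By symmetry $j \leftrightarrow n-j$ it suffices to bound $2\sum_{1\le j \le n/2} j^{-2/3}(n-j)^{-2/3}$. On this range $n-j \ge n/2$, so $(n-j)^{-2/3} \le (n/2)^{-2/3} = 2^{2/3} n^{-2/3}$, and therefore
$$
S_n \le 2\cdot 2^{2/3} n^{-2/3}\sum_{1\le j\le n/2} j^{-2/3}.
$$
The tail sum $\sum_{j\le n/2} j^{-2/3}$ is comparable to $\int_1^{n/2} x^{-2/3}\,dx \asymp n^{1/3}$ since the exponent $2/3 < 1$, giving $\sum_{j\le n/2} j^{-2/3} \le C n^{1/3}$. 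Combining, $S_n \le C n^{-2/3}\cdot n^{1/3} = C n^{-1/3}$, and multiplying by $n^n$ yields the bound $C n^{n-1/3}$, which is even stronger than the claimed $C n^{n-2/3}$ — so the stated inequality follows a fortiori (in fact with room to spare, which is reassuring and suggests the constant $2/3$ in the exponents is what makes the geometric series-like behavior work).

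The main obstacle I anticipate is getting the binomial coefficient bound pointed in the right direction with an honest constant: the inequality $\binom{n}{j}\le n^n/(j^j(n-j)^{n-j})$ needs the factorial estimate applied carefully, and one should either invoke Stirling's formula with explicit error control or use the elementary inequality $\frac{n!}{j!\,(n-j)!}\le \frac{n^n}{j^j(n-j)^{n-j}}$ directly (which is equivalent to $\binom{n}{j} p^j(1-p)^{n-j}\le 1$ at $p=j/n$, i.e.\ a probability is at most one). I would use this probabilistic reading of the bound, since it is exact and avoids Stirling constants entirely. A secondary point to handle cleanly is that the Stirling-type comparison gives $\binom{n}{j}$ up to a polynomial factor in $n$ if one is not careful, but the probability-at-most-one argument sidesteps this and keeps the estimate sharp. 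Everything else is routine summation, and the slack between $n^{-1/3}$ and the required $n^{-2/3}$ means small inefficiencies in the constants will not affect the conclusion.
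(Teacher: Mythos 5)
There is a genuine gap, and it sits at the very last step: you bound the sum by $Cn^{n}\cdot n^{-1/3}=Cn^{n-1/3}$ and declare this ``even stronger than the claimed $Cn^{n-2/3}$.'' The inequality direction is reversed: since $n^{-1/3}\ge n^{-2/3}$, your bound $Cn^{n-1/3}$ is \emph{weaker} than the target by a factor of $n^{1/3}$, so the lemma does not follow from your argument. The loss is not a cosmetic inefficiency that a cleaner constant would fix; it is intrinsic to the crude bound $\binom{n}{j}\le n^{n}/\bigl(j^{j}(n-j)^{n-j}\bigr)$. Near $j\approx n/2$ that bound overestimates $\binom{n}{j}$ by a factor $\asymp\sqrt{n}$, and summing this overestimate over the $\asymp n$ middle indices is exactly what produces your spurious extra $n^{1/3}$. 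Ironically, you anticipated this issue (``Stirling gives $\binom{n}{j}$ up to a polynomial factor if one is not careful'') and chose the probability-at-most-one bound to avoid it — but here the Stirling correction works \emph{in your favor}, and discarding it is what breaks the estimate.

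The paper's proof keeps the full Stirling asymptotics, i.e.\ $\binom{n}{j}\le C\,\frac{n^{n}}{j^{j}(n-j)^{n-j}}\sqrt{\frac{n}{j(n-j)}}$, so the summand is dominated by $Cn^{n-2/3}\,n^{7/6}j^{-7/6}(n-j)^{-7/6}$. Writing $\frac{n}{j(n-j)}=\frac1j+\frac1{n-j}$ and using $\bigl(\frac1j+\frac1{n-j}\bigr)^{7/6}\le C\bigl(j^{-7/6}+(n-j)^{-7/6}\bigr)$, the remaining sum has exponent $7/6>1$ and is therefore bounded by an absolute constant, giving $Cn^{n-2/3}$. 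Note there is no slack to spare: the endpoint terms $j=1$ and $j=n-1$ alone contribute $n(n-1)^{n-5/3}\asymp n^{n-2/3}$, so the stated bound is sharp and any argument losing a power of $n$ (as yours does) cannot be patched without recovering the square-root factor. Your reduction of the problem and the symmetric endpoint analysis of $\sum j^{-2/3}(n-j)^{-2/3}$ are both fine as computations; the fix is to replace the multinomial bound by the Stirling bound with the $\sqrt{n/(j(n-j))}$ factor and redo the summation with exponent $7/6$.
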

where $C>0$ is a constant independent of $n$.
\begin{proof}
By the Stirling formula,
\begin{align*}
&\sum_{j=1}^{n-1}\binom{n}{j}j^{j-2/3}(n-j)^{n-j-2/3}
\le Cn^{n-2/3} \sum_{j=1}^{n-1} \frac
{n^{7/6}}{j^{7/6}(n-j)^{7/6}}\\
&=Cn^{n-2/3} \sum_{j=1}^{n-1} \left(\frac 1 j+\frac 1 {n-j}
\right)^{7/6}\\
&\le Cn^{n-2/3} \sum_{j=1}^{n-1}\left( \frac 1 {j^{7/6}}+\frac 1
{(n-j)^{7/6}} \right)
\le Cn^{n-2/3}.
\end{align*}
The lemma is proved.
\end{proof}

The next combinatorial lemma can be proved by using induction.

\begin{lemma}
                                \label{lem2.2}
Let $f$ and $g$ be two smooth functions on $\bR$. For any
integer $n\ge 1$, we have
\begin{align*}
D^n(t^n f(t)g(t))&=\sum_{j=0}^n \binom{n}{j} D^j(t^j
f(t))D^{n-j}(t^{n-j}g(t))\\
&\quad -n\sum_{j=0}^{n-1} \binom{n-1}{j} D^j(t^j
f(t))D^{n-1-j}(t^{n-1-j}g(t)),
\end{align*}
where we denote $D=\partial_t$.
\end{lemma}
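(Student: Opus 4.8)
The plan is to prove the identity
\[
D^n(t^n f g)=\sum_{j=0}^n \binom{n}{j} D^j(t^j f)\,D^{n-j}(t^{n-j}g)
-n\sum_{j=0}^{n-1}\binom{n-1}{j}D^j(t^j f)\,D^{n-1-j}(t^{n-1-j}g)
\]
by induction on $n$. First I would verify the base case $n=1$ directly: the left side is $D(t f g)=fg+t(f'g+fg')$, while the first sum gives $D^0(f)D^1(tg)+D^1(tf)D^0(g)=f(g+tg')+(f+tf')g$ and the second sum (with the prefactor $-1$) gives $-D^0(f)D^0(g)=-fg$, and these match. For the inductive step I would assume the formula holds for $n$ and apply one more derivative $D$ to the expression $D^n(t^n fg)$. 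The natural move is to write $t^{n+1}fg = t\cdot(t^n fg)$ so that $D^{n+1}(t^{n+1}fg)=D^{n+1}(t\cdot t^n fg)$, and then use the elementary Leibniz-type fact that $D^{m}(t h)=t\,D^m h + m\,D^{m-1}h$ to relate $D^{n+1}(t\cdot t^nfg)$ to $D^{n+1}(t^n fg)$ and $D^n(t^n fg)$.

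The cleaner route, which I would actually carry out, is to let $P_j := D^j(t^j f)$ and $Q_j := D^j(t^j g)$, so that the claimed identity reads
\[
D^n(t^n fg)=\sum_{j=0}^n \binom{n}{j} P_j Q_{n-j}
-n\sum_{j=0}^{n-1}\binom{n-1}{j}P_j Q_{n-1-j}.
\]
The key structural observation is the recursion $P_{j+1}=D(t\cdot t^j f)=t\,D(t^j f)\cdot(\ldots)$; more precisely, using $D^{j}(t^j f)$ one checks $D\,P_j = D^{j+1}(t^j f)$ and $P_{j+1}=D^{j+1}(t^{j+1}f)=D^{j+1}(t\cdot t^j f)=t\,D^{j+1}(t^j f)+(j+1)D^j(t^j f)$, i.e. $P_{j+1}=t\,DP_j+(j+1)P_j$, and similarly for $Q_j$. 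I would apply $D$ to both sides of the inductive hypothesis, differentiate each product $P_jQ_{n-j}$ by the ordinary product rule into $(DP_j)Q_{n-j}+P_j(DQ_{n-j})$, and then use the recursions to re-express $DP_j$ and $DQ_{n-j}$ in terms of the $P$'s and $Q$'s with one index shifted.

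The main obstacle will be the bookkeeping: after differentiating, reindexing the resulting sums and collecting the coefficients of each product $P_j Q_{m}$ so that they assemble into exactly the $n+1$ version of the two sums. This amounts to verifying a pair of binomial-coefficient identities of Pascal type, namely that the coefficients produced by $\binom{n}{j}$ (together with the shifts from the recursion and the correction term $-n\binom{n-1}{j}$) recombine into $\binom{n+1}{j}$ and $-(n+1)\binom{n}{j}$. I expect the $t\,DP_j$ terms to generate the higher-order pieces while the $(j+1)P_j$ and the $-n(\cdots)$ correction terms conspire, via $\binom{n}{j}+\binom{n}{j-1}=\binom{n+1}{j}$, to produce the corrected structure; this is the step where care with the index ranges (especially the boundary terms $j=0$ and $j=n$, which fall outside the range of the second sum) is essential. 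Once these coefficient identities are checked, the inductive step closes and the lemma follows.
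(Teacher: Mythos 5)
Your proposal is correct and takes essentially the same route as the paper: induction on $n$ driven by the Leibniz-type identity $D^m(th)=t\,D^m h+m\,D^{m-1}h$, which is exactly the paper's stated relation $D^n(t^n fg)=tD^n(t^{n-1}fg)+nD^{n-1}(t^{n-1}fg)$; you merely spell out the "straightforward computation" the paper omits. Your bookkeeping does close: writing $S_m=\sum_{j=0}^m\binom{m}{j}P_jQ_{m-j}$ (so the second sum is $S_{n-1}$ and the claim is $R_n=S_n-nS_{n-1}$), the recursions $tDP_j=P_{j+1}-(j+1)P_j$ and $tDQ_m=Q_{m+1}-(m+1)Q_m$ together with Pascal's rule give $tDS_m=S_{m+1}-(m+2)S_m$, whence $R_{n+1}=tDR_n+(n+1)R_n=S_{n+1}-(n+1)S_n$ as required.
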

\begin{proof}
It follows from a straightforward computation by using
$$
D^n(t^n f(t)g(t))=tD^n(t^{n-1} f(t)g(t))+nD^{n-1}(t^{n-1}
f(t)g(t))
$$
and the inductive assumption.
\end{proof}

Let $\mathcal P$ be the Helmholtz (Leray--Hopf) projection in
$\bR^d$, and
$E(t,x)=\mathcal P\Gamma(t,x)$ be the Stokes-Oseen kernel,
where
$$
\Gamma=(4\pi t)^{-d/2}e^{-|x|^2/(4t)}
$$
is
the heat kernel. Recall that $E$ satisfies the
homogeneous heat equation, the semigroup property, and
$$
E(t,x)=t^{-d/2}E(1,x/\sqrt t),
$$
where $E(1,\cdot)$ is a smooth function on $\bR^d$ and decays
like $C/|x|^d$ as $x\to \infty$. Moreover,
$(\partial_t E)(1,x)$ and $(\nabla E)(1,x)$ decay like
$C/|x|^{d+2}$ and $C/|x|^{d+1}$ respectively as $x\to \infty$.
See, for instance, \cite{So}. Using these properties, we easily
obtain
\begin{equation}
                                \label{eq1.56a}
\|\nabla E(t,\cdot)\|_{L_1}\le C_0 t^{-1/2},
\end{equation}
and
\begin{equation*}
\|\partial_t^k E(t,\cdot)\|_{L_1}\le C_0^{k+1}k^{k-2/3}
t^{-k},\quad
\|\partial_t^k \nabla E(t,\cdot)\|_{L_1}\le C_0^{k+1}k^{k-2/3}
t^{-k-1/2}
\end{equation*}
for any integer $k\ge 1$ and $t>0$, where $C_0\ge 1$ is a
constant. It then follows from the Leibniz rule that
\begin{equation}
                                \label{eq1.56b}
\|\partial_t^k (t^k \nabla E(t,\cdot))\|_{L_1}\le
C_1^{k+1}k^{k-2/3} t^{-1/2}.
\end{equation}
Similarly, we have
\begin{equation}
                                \label{eq1.56c}
\|\partial_t^k (t^k \Gamma(t,\cdot))\|_{L_1}\le
C_1^{k+1}k^{k-2/3}.
\end{equation}

To prove Theorem \ref{thm2.1}, we first establish the following
proposition.
\begin{proposition}
                                    \label{prop2.2}
Under the conditions of Theorem \ref{thm2.1}, for any $n\ge 1$,
we have
\begin{equation}
                                    \label{eq2.48}
\sup_{t\in (0,1]} \|\partial_t^n (t^n
u(t,\cdot))\|_{L_\infty(\bR^d)}
\le N^{n-1/2}n^{n-2/3}
\end{equation}
for some sufficiently large constant $N\ge 1$ depending only on the dimension and $\| u \|_{L_\infty}$.
\end{proposition}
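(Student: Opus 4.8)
The plan is to prove (\ref{eq2.48}) by strong induction on $n$, starting from the mild (Duhamel) representation
\[
u(t)=\Gamma(t)*u_0-\int_0^t \nabla E(t-s)*(u\otimes u)(s)\,ds ,
\]
$u_0=u(0,\cdot)$, where $\mathcal P\nabla\cdot$ has been folded into the kernel, written schematically as $\nabla E$. Put $m_n=\sup_{t\in(0,1]}\|\partial_t^n(t^nu(t,\cdot))\|_{L_\infty}$; the input $m_0\le C_2$ is (\ref{eq7.19}), and the goal is $m_n\le N^{n-1/2}n^{n-2/3}$ for a large $N$ fixed at the end. The sole analytic difficulty is that $\partial_t\nabla E$ is not locally integrable, so $\partial_t^n$ cannot be applied to the Duhamel term naively. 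The device --- the paper's announced ``algebraic manipulation of the kernel'' --- is to first redistribute the weight by $t^n=\sum_{k=0}^n\binom nk (t-s)^k s^{n-k}$, attaching $(t-s)^k$ to the kernel and $s^{n-k}$ to the nonlinearity, and then to integrate by parts in time so that exactly $k$ of the $n$ derivatives land on the kernel and $n-k$ on the nonlinearity. This produces the \emph{matched} form in which the kernel always appears as $\partial_\sigma^k(\sigma^k\nabla E)$, whose $L_1$ norm is the \emph{integrable} quantity $C_1^{k+1}k^{k-2/3}\sigma^{-1/2}$ of (\ref{eq1.56b}), and the nonlinearity always appears as $\partial_s^{n-k}(s^{n-k}\,u\otimes u)$, a matched weighted derivative. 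For $1\le k\le n-1$ the boundary terms generated by the integration by parts vanish, since $\sigma^k\nabla E$ vanishes to order $k$ at $\sigma=0$ and $s^{n-k}(u\otimes u)$ vanishes to order $n-k$ at $s=0$.

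With $\partial_t^n$ applied, I would dispose of the easy pieces first. The linear term is $[\partial_t^n(t^n\Gamma)]*u_0$, bounded in $L_\infty$ by $C_1^{n+1}n^{n-2/3}C_2$ through (\ref{eq1.56c}) and Young's inequality. The extreme weight term $k=n$ puts all derivatives on the kernel and leaves $u\otimes u$ undifferentiated, giving at most $2C_1^{n+1}n^{n-2/3}C_2^2$ by (\ref{eq1.56b}) and $\|u\|_{L_\infty}\le C_2$. For each $1\le k\le n-1$, Lemma \ref{lem2.2} expands $\partial_s^{n-k}(s^{n-k}\,u\otimes u)$ componentwise into products $\partial_s^a(s^au)\otimes\partial_s^b(s^bu)$ with $a+b\le n-k$, so that \emph{both} factors carry fewer than $n$ derivatives and the induction hypothesis applies; taking $L_\infty$ norms, using $\|f*g\|_{L_\infty}\le\|f\|_{L_1}\|g\|_{L_\infty}$, the integrable kernel factor, and $\int_0^t\sigma^{-1/2}\,d\sigma\le 2$, the contribution is a binomial sum of $m_am_b$ carrying the kernel weight $k^{k-2/3}$. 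Lemma \ref{lem2.1} collapses every such sum, and since $a+b\le n$ one checks that the total of all these terms is at most $C\,N^{n-1}n^{n-2/3}$ --- a factor $N^{-1/2}$ below the target and hence harmless for large $N$.

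The remaining piece, $k=0$, is where the real work sits. Here all $n$ derivatives fall on $\partial_s^n(s^n\,u\otimes u)$, which Lemma \ref{lem2.2} turns into products $\partial_s^a(s^au)\otimes\partial_s^b(s^bu)$ with $a+b\in \{n-1,n\}$. The interior indices again feed the induction and are summed by Lemma \ref{lem2.1} into a subleading $C\,N^{n-1}n^{n-2/3}$, but the two extreme indices $a=0$ and $a=n$ in the $a+b=n$ sum reproduce the \emph{top-order} unknown, namely $u\otimes\partial_s^n(s^nu)$ and its transpose. After taking norms these yield, with $a_n(t):=\|\partial_t^n(t^nu(t))\|_{L_\infty}$,
\[
a_n(t)\le F_n+C_0C_2\int_0^t (t-s)^{-1/2}a_n(s)\,ds,\qquad F_n\le C'N^{n-1}n^{n-2/3},
\]
in which the coefficient $C_0C_2$ of the unknown is \emph{not} small and cannot be absorbed directly. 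This self-referential, weakly singular term is the main obstacle. I would resolve it by the generalized (Henry-type) Gronwall inequality for the kernel $(t-s)^{-1/2}$, which bounds $a_n$ on $(0,1]$ by $F_n\,e^{cC_2^2}$; the exponential constant depends only on $\|u\|_{L_\infty}$ and $d$. Since every $F_n$ and every easy term is at most $C'N^{n-1}n^{n-2/3}$, choosing $N\ge (C'e^{cC_2^2})^2$ converts $C'e^{cC_2^2}N^{n-1}$ into at most $N^{n-1/2}$, closing the induction and giving (\ref{eq2.48}). The most delicate bookkeeping, beyond the Gronwall step, is verifying that the integration by parts in the matched-form reduction really does annihilate all boundary terms for $1\le k\le n-1$ and keeps the $k=0,n$ endpoints in the controlled form described above.
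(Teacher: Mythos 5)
Your proposal is correct and follows essentially the same route as the paper's proof: the same Duhamel representation with the gradient moved onto the kernel, the same binomial redistribution $t^n=\sum_k\binom{n}{k}(t-s)^k s^{n-k}$ with the matched kernel bounds \eqref{eq1.56b}--\eqref{eq1.56c}, the same use of Lemmas \ref{lem2.1} and \ref{lem2.2} to make all interior terms subleading while isolating the top-order contribution $2C_2\|\partial_t^n(t^n u)\|_{L_\infty}$ from the $k=0$ piece, and the same closing of the induction via a Gronwall inequality with the weakly singular kernel $(t-s)^{-1/2}$ and a large final choice of $N$. The only cosmetic difference is that you justify passing the $n-k$ remaining derivatives through the integral by integration by parts with explicitly vanishing boundary terms, whereas the paper does the same bookkeeping via the change of variables $s\to t-s$; these are the identical computation.
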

\begin{proof}
We shall prove the proposition inductively.
As $u$ is a mild solution, we have
$$
u(t,x)=E(t,x)*u(0,x)-\int_0^t E(t-s,x)*\nabla (u\otimes
u)(s,x)\,ds,
$$
where $*$ denotes the spatial convolution. Then, by using
integration by parts,
\begin{align*}
&\partial_t^n (t^n u(t,x))\\
&=\partial_t^n (t^n
E(t,x)*u(0,x))-\partial_t^n \left(\int_0^{t} t^n \nabla
E(t-s,x)*(u\otimes u)(s,x)\,ds\right)\\
&:=I_1+I_2.
\end{align*}
By using \eqref{eq7.19} and \eqref{eq1.56c},
\begin{equation*}
|I_1|\le C_2C_1^{n+1}n^{n-2/3}\le N^{n-2/3}n^{n-2/3}
\end{equation*}
for sufficiently large $N$.

To estimate $I_2$, we first note that
\begin{align*}
&\int_0^{t} t^n \nabla E(t-s,x)*(u\otimes u)(s,x)\,ds\\
&=\sum_{k=0}^n \binom{n}{k} \int_0^{t} (t-s)^{k} \nabla
E(t-s,x)*\left(s^{n-k}(u\otimes u)(s,x)\right)\,ds.
\end{align*}
Therefore,
\begin{align*}
I_2&=-\sum_{k=0}^n \binom{n}{k} \partial_t^{n} \int_0^{t}
\big((t-s)^{k} \nabla E(t-s,x)\big)*\left(s^{n-k}(u\otimes
u)(s,x)\right)\,ds\\
&=-\sum_{k=0}^n \binom{n}{k} \partial_t^{n-k} \int_0^{t}
\partial_t^k\big((t-s)^{k} \nabla
E(t-s,x)\big)*\left(s^{n-k}(u\otimes u)(s,x)\right)\,ds\\
&=-\sum_{k=0}^n \binom{n}{k} \partial_t^{n-k} \int_0^{t}
\partial_s^k\big(s^{k} \nabla
E(s,x)\big)*\left((t-s)^{n-k}(u\otimes u)(t-s,x)\right)\,ds\\
&=-\sum_{k=0}^n \binom{n}{k} \int_0^{t} \partial_s^k\big(s^{k}
\nabla E(s,x)\big)*\partial_t^{n-k} \left((t-s)^{n-k}(u\otimes
u)(t-s,x)\right)\,ds,
\end{align*}
where in the third equality, we made a change of time variable $s \to t-s$, which allows us to
pass the time derivatives $\partial^{n-k}_t$ through the integral  without producing additional terms.

When $n=1$, we compute
\[
\begin{aligned}
\big|\partial_t &( t (u \otimes u)(t,x) ) \big| \\
&= \big|\partial_t  (t u) \otimes u (t, x) + u \otimes \partial_t (t u) (t, x) - u \otimes u (t, x) \big|
\\
& \le C \| u \|_{L_\infty} \big| \partial_t (t u)(t, x) \big| +
C   \| u \|^2_{L_\infty},
\end{aligned}
\] where $C>0$ depends only on the dimension $d$.
In general, by the inductive assumption and Lemmas \ref{lem2.2} and
\ref{lem2.1}, we have for $k=1,\ldots,n-1$,
\begin{equation}
                                \label{eq7.29}
|\partial_t^k (t^k (u\otimes u)(t,x))|
\le N^{k-1/3}k^{k-2/3}
\end{equation}
and
\begin{equation}
                                    \label{eq8.00}
|\partial_t^n (t^n (u\otimes u)(t,x))|
\le 2C_2 |\partial_t^n (t^n u(t,x))|+N^{n-3/4}n^{n-2/3}
\end{equation}
provided that $N$ is sufficiently large, depending only on the dimension $d$ and
$\Vert u \Vert_{L_\infty}$.
It then follows from Lemma \ref{lem2.1}, \eqref{eq1.56a},
\eqref{eq1.56b}, \eqref{eq8.00}, and \eqref{eq7.29} that
\begin{align*}
|I_2|&\le \int_0^t (t-s)^{-1/2}\Big[C_1^{n+1} n^{n-2/3}C_2^2
+C_0\big(2C_2\|\partial_t^n((t-s)^n
u(t-s,\cdot))\|_{L_\infty}\\
&\quad + N^{n-3/4}n^{n-2/3}\big)
+\sum_{k=1}^{n-1}\binom{n}{k}C_1^{k+1}k^{k-2/3} \cdot
N^{n-k-1/3}(n-k)^{n-k-2/3}\Big]\ ds\\
&\le N^{n-2/3}n^{n-2/3}t^{1/2}+2C_1C_2\int_0^t
s^{-1/2}\|\partial_s^n(s^n u(s,\cdot))\|_{L_\infty}
\ ds
\end{align*}
for sufficiently large $N$ depending on $C_1$ and $C_2$.

Combining the estimates of $I_1$ and $I_2$, we get
\eqref{eq2.48} by applying the Gronwall inequality, and
complete
the proof of the proposition.
\end{proof}

Finally, we complete the proof of Theorem \ref{thm2.1}.
\begin{proof}[Proof of Theorem \ref{thm2.1}]
Note that
$$
\partial_t^n(t^k u)=n\partial_t^{n-1}(t^{k-1}
u)+t\partial_t^n(t^{k-1}u).
$$
Taking $k=n$ and using \eqref{eq2.48}, we obtain
$$
\sup_{t\in (0,1]} \|t\partial_t^n (t^{n-1}
u(t,\cdot))\|_{L_\infty(\bR^d)}
\le N^{n}(1+1/N)n^{n}.
$$
By induction,
$$
\sup_{t\in (0,1]} \|t^n\partial_t^n
u(t,\cdot)\|_{L_\infty(\bR^d)}
\le N^{n}(1+1/N)^n n^{n}=(N+1)^n n^{n}.
$$
The theorem is proved.
\end{proof}

\section*{Acknowledgement} We wish to thank Prof. Bobo Hua,
Igor Kukavica, and F. H. Lin for helpful discussions.

\medskip

\noindent {\bf Funding:} Hongjie Dong was partially supported by the NSF under agreement DMS-1600593. Qi S. Zhang was partially supported by the Simons Foundation Grant No. 282153.

\end{document}